\newtheorem{theorem}{Theorem}[section]
\newtheorem{definition}[theorem]{Definition}
\newtheorem{corollary}[theorem]{Corollary}
\newtheorem{lemma}[theorem]{Lemma}
\newtheorem{example}[theorem]{Example}
\newtheorem{remark}[theorem]{Remark}
\newcommand{\R}{{\mathbb{R}}}
\newcommand{\Z}{{\mathbb{Z}}}
\newcommand{\N}{{\mathbb{N}}}
\DeclareMathOperator{\InTR}{Int}
\begin{document}
\begin{center}
\textbf{WEAKLY SYMMETRICALLY CONTINUOUS FUNCTIONS}
\end{center}
\begin{center}
Prapanpong Pongsriiam $^\dag$\footnote{{\tt Corresponding author email: prapanpong@gmail.com}} and Teraporn Thongsiri$^\dag$\\

\vskip.5cm
% Address 1 -------------------------------------------------------------------

\centerline {$^\dag$Department of Mathematics, Faculty of Science, Silpakorn University, }
\centerline {Nakhon Pathom, 73000, Thailand} 
\centerline {e-mail : {\tt prapanpong@gmail.com}}
\centerline {e-mail : {\tt teraporn.thongsiri@gmail.com}}\end{center}

\begin{center}
\textbf{Abstract}
\end{center}
We extend the definition of weak symmetric continuity to be applicable for functions defined on any nonempty subset of $\R$. Then we investigate basic properties of weakly symmetrically continuous functions and compare them with those of symmetrically continuous functions and weakly continuous functions. Several examples are also given.
\vspace{0.5cm}\\
\noindent Keywords:symmetric continuity; weak continuity; weak symmetric continuity \\
\noindent 2000 Mathematics Subject Classification : 26A15

\section{Introduction and Preliminaries}

There are many types of generalized continuities, three of which will be discussed in this article. Throughout, let $A$ be a nonempty subset of $\R$ and $a \in A$.
\begin{itemize}
\item[(1)] A function $f:A\rightarrow\R$ is said to be symmetrically continuous at $a$ if
\begin{align*}
	\forall \varepsilon >0 \exists\delta >0 \forall h \in \R, \left|h\right|<\delta\wedge a+h, a-h \in A \Rightarrow \left|f(a+h)-f(a-h)\right| < \varepsilon.
 \end{align*}
\item[(2)]  Let $L_a(A)$ and $U_a(A)$ denote respectively the set of all strictly increasing sequence in $A$ converging to $a$ and the set of all strictly decreasing sequence in $A$ converging to $a$. 
A function $f:A\rightarrow\R$ is said to be weakly continuous at $a$ if the following statements hold:
\begin{itemize}
\item[(2.1)]  if $L_a(A) \neq \emptyset$, then there is $\left(x_n\right) \in L_a(A)$ such that $\lim_{n \rightarrow \infty} f\left(x_n\right) = f(a)$, and 
\item[(2.2)]  if $U_a(A) \neq \emptyset$, then there is $\left(y_n\right) \in U_a(A)$ such that $\lim_{n \rightarrow \infty} f\left(y_n\right) = f(a)$.
\end{itemize}
\end{itemize}

We remark that the above definitions were first given for functions defined on $\R$ and were later extended \cite{Weak, R_Sym, Sym} for functions defined on any nonempty subset of $\R$.
Similarly, the definition of weak symmetric continuity is given for functions defined on $\R$ as follows \cite{NS, Def_Weak_Sym} 

\begin{itemize}
\item[(3)]  A function $g:\R \rightarrow \R$ is said to be weakly symmetrically continuous at $a \in \R$ if there exists a sequence $\left( h_n\right)$ of positive real numbers converging to $0$ such that $\lim_{n\rightarrow \infty} f(a+h_n) - f(a-h_n) = 0$. 
\end{itemize}

In this article, we will extend (3) so that it is applicable for functions defined on a nonempty subset of $\R$. Let $S_a(A)$ be the set of all sequences $\left(h_n \right)$ of positive real numbers converging to $0$ such that $a+h_n$ and $a-h_n$ are in $A$ for every $n \in \N$. Then weak symmetric continuity can be extended as follows: 

\begin{definition} \label{Def_WSC}
A function $f:A\rightarrow\R$ is said to be weakly symmetrically continuous at $a$ if $S_a(A) \neq \emptyset$ implies that there exists a sequence $\left( h_n\right) \in S_a(A)$ such that $\lim_{n\rightarrow \infty} f(a+h_n) - f(a-h_n) = 0$.
\end{definition}

Our purpose is to obtain basic properties of weakly symmetrically continuous functions defined on a nonempty subset of $\R$ and compare them with those of weakly continuous functions and symmetrically continuous functions. We will give basic relations between these three types of continuities in Section 2. Then we will investigate combinations of these functions in Section 3. Then a study of sequences of weakly symmetrically continuous functions will be given in Section 4. We refer the reader to \cite{Be,Da,Do,H,Ko,Ma,Ok,Uni,Pon,Ros,S,S2} for other results concerning symmetric continuity of functions. We end this section by giving some results that will be used later.

\begin{lemma} {\rm\cite[Theorem 4.1]{Sym}} \label{LimitSym}
Let $f:A\rightarrow\R$ and let $a \in A$. Then $f$ is symmetrically continuous at $a$ if and only if for every sequence $\left(h_n\right)$ in $\R$ with $a+h_n,a-h_n \in A$ for every $n \in \N$, $\lim_{n\rightarrow \infty} h_n = 0$ implies that $\lim_{n\rightarrow \infty} f\left(a+h_n\right) - f\left(a-h_n\right) = 0$. 
\end{lemma}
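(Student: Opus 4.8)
The plan is to establish the two implications separately: for the forward direction I will use the standard passage from an $\varepsilon$--$\delta$ statement to a sequential one, and for the converse I will argue by contraposition, manufacturing a ``bad'' sequence from the failure of the $\varepsilon$--$\delta$ condition.

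\emph{Forward direction.} Assume $f$ is symmetrically continuous at $a$ and let $(h_n)$ be any sequence in $\R$ with $a+h_n,\,a-h_n\in A$ for every $n\in\N$ and $\lim_{n\to\infty}h_n=0$. Fix $\varepsilon>0$. Symmetric continuity provides $\delta>0$ such that $|h|<\delta$ together with $a+h,\,a-h\in A$ forces $|f(a+h)-f(a-h)|<\varepsilon$. Since $h_n\to 0$, choose $N$ with $|h_n|<\delta$ for all $n\ge N$; applying the implication with $h=h_n$ gives $|f(a+h_n)-f(a-h_n)|<\varepsilon$ for all $n\ge N$. As $\varepsilon>0$ was arbitrary, $\lim_{n\to\infty} f(a+h_n)-f(a-h_n)=0$.

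\emph{Converse.} I prove the contrapositive. Suppose $f$ is not symmetrically continuous at $a$. Then there is $\varepsilon_0>0$ such that for every $\delta>0$ there exists $h\in\R$ with $|h|<\delta$, $a+h,\,a-h\in A$, and $|f(a+h)-f(a-h)|\ge\varepsilon_0$. For each $n\in\N$ apply this with $\delta=1/n$ to obtain such an $h_n$; then $a+h_n,\,a-h_n\in A$, $|h_n|<1/n$ so $h_n\to 0$, yet $|f(a+h_n)-f(a-h_n)|\ge\varepsilon_0$ for every $n$, so $f(a+h_n)-f(a-h_n)$ does not converge to $0$. Hence the sequential condition fails.

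I do not anticipate a genuine obstacle here; the argument is the routine equivalence of a pointwise ``limit'' statement with its $\varepsilon$--$\delta$ form. The only point deserving a line of care is that the increment $h$ in the definition of symmetric continuity ranges over all of $\R$, so the sequences $(h_n)$ appearing in the lemma may take values of either sign; this is harmless because $|f(a+h)-f(a-h)|$ is invariant under $h\mapsto -h$ and the side conditions $a\pm h_n\in A$ are likewise symmetric, so no extra bookkeeping is needed.
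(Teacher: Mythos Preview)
Your proof is correct and complete; both directions are the standard $\varepsilon$--$\delta$/sequential equivalence, carried out cleanly. Note, however, that the paper does not prove this lemma at all: it is simply quoted from \cite[Theorem~4.1]{Sym} as a known result, so there is no in-paper argument to compare against. Your write-up would serve perfectly well as the omitted proof.
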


\begin{lemma} {\rm\cite[Proposition~2.2]{Sym}} \label{Prop2.2}
Let $f:A\rightarrow\R$ and $a \in \InTR{A}$. Assume that $\lim_{x \rightarrow a^{+}} f(x) $ and $\lim_{x \rightarrow a^{-}} f(x) $ exist in $\R$. Then $f$ is symmetrically continuous at $a$ if and only if $\lim_{x \rightarrow a} f(x)$ exists.
\end{lemma}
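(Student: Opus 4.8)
The plan is to reduce both implications to the sequential characterization of symmetric continuity in Lemma~\ref{LimitSym}, together with the elementary fact that, since $\lim_{x\to a^{+}}f(x)$ and $\lim_{x\to a^{-}}f(x)$ are assumed to exist in $\R$ and $a\in\InTR{A}$ is a two-sided accumulation point of $A$, the two-sided limit $\lim_{x\to a}f(x)$ exists in $\R$ if and only if these two one-sided limits coincide. Throughout the argument, write $L^{+}=\lim_{x\to a^{+}}f(x)$ and $L^{-}=\lim_{x\to a^{-}}f(x)$.

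For the direction ($\Leftarrow$), assume $\lim_{x\to a}f(x)=L$ exists, so that $L^{+}=L^{-}=L$. Let $(h_n)$ be any sequence in $\R$ with $a+h_n,a-h_n\in A$ for all $n\in\N$ and $h_n\to 0$. Given $\varepsilon>0$, choose $\delta>0$ so that $0<|x-a|<\delta$ (with $x\in A$) implies $|f(x)-L|<\varepsilon/2$, and pick $N$ with $|h_n|<\delta$ for $n\ge N$. For such $n$: if $h_n\neq 0$, both $a+h_n$ and $a-h_n$ lie in the punctured $\delta$-neighbourhood of $a$, so $|f(a+h_n)-f(a-h_n)|\le|f(a+h_n)-L|+|L-f(a-h_n)|<\varepsilon$; and if $h_n=0$, then $f(a+h_n)-f(a-h_n)=0$. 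Hence $f(a+h_n)-f(a-h_n)\to 0$, and Lemma~\ref{LimitSym} gives that $f$ is symmetrically continuous at $a$.

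For the direction ($\Rightarrow$), assume $f$ is symmetrically continuous at $a$. Since $a\in\InTR{A}$, there is $r>0$ with $(a-r,a+r)\subseteq A$; put $h_n=r/2^{n}$, so that $h_n>0$, $h_n\to 0$, and $a+h_n,a-h_n\in A$ for every $n\in\N$. Because $a+h_n\to a$ from the right and $a-h_n\to a$ from the left, we get $f(a+h_n)\to L^{+}$ and $f(a-h_n)\to L^{-}$. On the other hand, applying Lemma~\ref{LimitSym} to this particular sequence yields $f(a+h_n)-f(a-h_n)\to 0$, whence $L^{+}-L^{-}=0$. Therefore $L^{+}=L^{-}$, and consequently $\lim_{x\to a}f(x)$ exists and equals this common value.

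I do not expect a serious obstacle here; the content is essentially bookkeeping on top of Lemma~\ref{LimitSym}. The two points that need a little care are: using the hypothesis $a\in\InTR{A}$ exactly once, in the direction ``symmetrically continuous $\Rightarrow$ limit exists'', to manufacture an admissible test sequence of \emph{positive} reals for which both $a+h_n$ and $a-h_n$ stay in $A$ (without an interior point such a sequence, and indeed a genuine two-sided approach to $a$, need not be available); and, in the converse direction, handling sequences $(h_n)$ of arbitrary sign uniformly — including the degenerate case $h_n=0$ — when invoking Lemma~\ref{LimitSym}, which is taken care of by the punctured-neighbourhood estimate above.
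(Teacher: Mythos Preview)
Your argument is correct. Note, however, that the paper does not supply its own proof of this lemma: it is imported verbatim from \cite[Proposition~2.2]{Sym} and merely stated for later use. So there is no in-paper proof to compare against; you have supplied a self-contained justification where the authors chose to cite one.

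That said, your write-up is exactly the natural proof and almost certainly the one in \cite{Sym}: reduce via Lemma~\ref{LimitSym} to sequences, and use a single positive test sequence (your $h_n=r/2^{n}$, available because $a\in\InTR{A}$) to force $L^{+}=L^{-}$. Your remarks at the end correctly isolate the two places where care is needed --- the interior hypothesis is used only to guarantee an admissible sequence in the forward direction, and the trivial case $h_n=0$ must be handled in the backward direction --- so the proof is complete as written.
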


\begin{remark} \label{remark1}
From the definition given above if $f:A\rightarrow \R$ and $a \in A$ is such that $L_a(A) = \emptyset$ and $U_a(A) = \emptyset$, then $f$ is weakly continuous at $a$. Similarly, if $S_a(A) = \emptyset$, then $f$ is symmetrically continuous and weakly symmetrically continuous at $a$. In particular, if $a$ is not a cluster point of $A$, then $f$ is weakly continuous, symmetrically continuous, and weakly symmetrically continuous at $a$.
\end{remark}

%-------------------------------------------------- Basic Relations ------------------------------------------------------------
\section{Basic Relations}

Let $SC$, $WC$, and $WSC$ be, respectively, the space of symmetrically continuous functions, the space of weakly continuous functions, and the space of weakly symmetrically continuous functions defined on nonempty subsets of $\R$. In this section, we will show that the following relations hold:  

\begin{itemize}
\item[(R1)] $SC \subseteq WSC$ and $WSC \nsubseteq SC$, 
\item[(R2)] $SC \nsubseteq WC$ and $WC \nsubseteq SC$, and
\item[(R3)] $WSC \nsubseteq WC$ and $WC \nsubseteq WSC$. 
\end{itemize}

\begin{theorem} \label{SC_implie_WSC}
Let $f:A\rightarrow \R$. If $f$ is symmetrically continuous at $a$, then $f$ is weakly symmetrically continuous at $a$. In particular, if $f$ is a symmetrically continuous function, then it is a weakly symmetrically continuous function.
\end{theorem}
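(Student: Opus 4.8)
The plan is to reduce everything to Lemma~\ref{LimitSym}, which recasts symmetric continuity at $a$ in purely sequential terms. First I would split into two cases according to whether $S_a(A)$ is empty. If $S_a(A) = \emptyset$, then the implication in Definition~\ref{Def_WSC} is satisfied vacuously --- this is precisely the observation recorded in Remark~\ref{remark1} --- so $f$ is weakly symmetrically continuous at $a$ with nothing further to check.

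In the remaining case $S_a(A) \neq \emptyset$, I would simply pick an arbitrary sequence $(h_n) \in S_a(A)$. By the definition of $S_a(A)$, the terms $h_n$ are positive real numbers with $h_n \to 0$ and $a+h_n, a-h_n \in A$ for every $n \in \N$. Since $f$ is symmetrically continuous at $a$, Lemma~\ref{LimitSym} applied to this particular sequence yields $\lim_{n\to\infty} f(a+h_n) - f(a-h_n) = 0$. Hence the very sequence $(h_n)$ already chosen witnesses the condition required in Definition~\ref{Def_WSC}, and $f$ is weakly symmetrically continuous at $a$.

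For the ``in particular'' clause, I would note that a symmetrically continuous function is, by definition, symmetrically continuous at every point of its domain; by the statement just proved it is therefore weakly symmetrically continuous at every such point, and hence a weakly symmetrically continuous function.

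I do not expect a genuine obstacle here: once Lemma~\ref{LimitSym} is in hand, the only point requiring care is the empty/nonempty dichotomy for $S_a(A)$, which is absorbed by the convention built into Definition~\ref{Def_WSC}. A one-line variant that invokes the $\varepsilon$--$\delta$ formulation of symmetric continuity directly (given $\varepsilon>0$, choose the corresponding $\delta$, and note $|h_n|<\delta$ for large $n$) would work equally well, but routing the argument through the sequential characterization keeps it cleanest.
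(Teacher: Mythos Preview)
Your proposal is correct and follows exactly the same approach as the paper, which simply says the result is immediate from Definition~\ref{Def_WSC} and Lemma~\ref{LimitSym}. You have merely unpacked that one line into the natural case split on whether $S_a(A)$ is empty.
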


\begin{proof}
This follows immediately from Definition \ref{Def_WSC} and Lemma \ref{LimitSym}.
\end{proof}

Next we give various examples to show a clearer picture of each type of continuities and to verify the relations (R1), (R2) and (R3). 

\begin{example} \label{WSC_notim_SC}
Let $A=\left\{ \frac{1}{n} \mid n \in \Z-\left\{0\right\} \right\} \cup \left\{ 0 \right\}$. Define $f:\R\rightarrow\R$ by
$$
f(x) = \begin{cases}
0, \quad & \text{if} \; x \in A; \\
1, \quad & \text{if} \; x > 0 \; \text{and} \; x \notin A; \\
-1, \quad & \text{if} \; x < 0 \; \text{and} \; x \notin A. 
\end{cases} 
$$
Since $\left( \frac{1}{n} \right)_{n \in \N} \in S_0(\R)$ and 
\begin{align*}
	\lim_{n\rightarrow \infty} \left| f\left(0+\frac{1}{n}\right) - f\left(0-\frac{1}{n}\right)\right| = \lim_{n\rightarrow \infty} \left| f\left(\frac{1}{n}\right) - f\left( -\frac{1}{n}\right)\right| =0,
\end{align*}
 $f$ is weakly symmetrically continuous at $0$. Observe that $\left( \frac{1}{n} \right)_{n \in \N} \in U_0(\R)$, $\left(- \frac{1}{n} \right)_{n \in \N} \in L_0(\R)$, and $\lim_{n\rightarrow \infty} f\left(\frac{1}{n}\right) = f(0) = \lim_{n\rightarrow \infty} f\left(- \frac{1}{n}\right)$. So f is weakly continuous at $0$. However, $f$ is not symmetrically continuous at $0$. To see this, $f\left(0 + \frac{\sqrt{2}}{n}\right) - f\left(0 - \frac{\sqrt{2}}{n}\right) = 1-(-1) = 2$ which dose not converge to $0$ as $n \rightarrow \infty$. So the assertion follows from Lemma \ref{LimitSym}. Next, let $a \in \R-\left\{0\right\}$. It is easy to see that $\lim_{x\rightarrow a^+} f(x) = \lim_{x\rightarrow a^-} f(x)$ exist and are equal to $1$ or $-1$. By Lemma \ref{Prop2.2}, f is symmetrically continuous at $a$. By Theorem \ref{SC_implie_WSC}, f is weakly symmetrically continuous at $a$. This shows that f is a weakly symmetrically continuous function, is symmetrically continuous at every point $a \in \R-\left\{0\right\}$ but is not symmetrically continuous at $a=0$. In addition, $f$ is not a weakly continuous function. For instance, $f(1)=0$ but for every $\left( y_n \right) \in U_1(\R)$, $f\left(y_n\right)=1$ for every $n \in \N$, so $\left(f\left(y_n\right)\right)_{n \in \N}$ does not converge to $f(1)$. 
\end{example}
Next we give a function which is both weakly continuous and weakly symmetrically continuous but is not symmetrically continuous.
\begin{example} \label{WSC_notim_SC2}
Let $A=\left\{ \frac{1}{n} \mid n \in \Z-\left\{0\right\} \right\} \cup \left\{ 0 \right\}$ and $B =\left\{ \frac{\sqrt{2}}{n} \mid n \in \Z-\left\{0\right\} \right\}$. Define $f:A\cup B \rightarrow\R$ by
$$
f(x) = \begin{cases}
0, \quad & \text{if} \; x \in A; \\
1, \quad & \text{if} \; x > 0 \; \text{and} \; x \in B; \\
-1, \quad & \text{if} \; x < 0 \; \text{and} \; x \in B. 
\end{cases} 
$$
Let $a \in A \cup B - \left\{0\right\}$. Then $U_a(A \cup B)$, $L_a(A \cup B)$, and $S_a(A \cup B)$ are the empty set. As mention in Remark \ref{remark1}, we obtain that $f$ is weakly continuous at a, symmetrically continuous at a, and weakly symmetrically continuous at a. Similar to Example \ref{WSC_notim_SC}, $f$ is weakly symmetrically continuous at $0$, is weakly continuous at $0$, but is not symmetrically continuous at $0$. Therefore $f$ is both a weakly symmetrically continuous function and weakly continuous function, but is not a symmetrically continuous function.
\end{example}

\begin{example} \label{WC_notimplies_SC_WSC}
Let $A=\left\{ \frac{1}{n} \mid n \in \N \right\}$ and let $B= \left\{ -\frac{\sqrt{2}}{n} \mid n \in \N \right\}$. Define $f:\R \rightarrow \R$ by
$$
f(x) = \begin{cases}
x, \quad & \text{if} \; x \in A\cup B\cup \left\{ 0 \right\}; \\
1, \quad &  \text{if} \; x > 0 \; \text{and} \; x \notin A; \\
-1, \quad & \text{if} \;  x < 0 \; \text{and} \; x \notin B. 
\end{cases}
$$
Observe that $\left( \frac{1}{n} \right)_{n \in \N} \in U_0(\R)$, $\left( - \frac{\sqrt{2}}{n} \right)_{n \in \N} \in L_0(\R)$, and 
\begin{align*}
	\lim_{n\rightarrow \infty} f\left(\frac{1}{n}\right) = \lim_{n\rightarrow \infty} \frac{1}{n} = f\left(0\right) = \lim_{n\rightarrow \infty} -\frac{\sqrt{2}}{n} = \lim_{n\rightarrow \infty} f\left(-\frac{\sqrt{2}}{n}\right).
\end{align*}
Therefore $f$ is weakly continuous at $0$. But $f$ is not weakly symmetrically continuous at $0$. To see this, let $(h_n) \in S_0(\R)$. Let $n \in \N$. By the definition of $S_0(\R)$, $h_n > 0$. If $h_n \in A$, then $\left| f\left(0+h_n\right) - f\left(0-h_n\right) \right| = h_n+1$. If $h_n \notin A$, then $\left| f\left(0+h_n\right) - f\left(0-h_n\right) \right| \in \left\{1+h_n, 2\right\}$. In any case, $\left| f\left(0+h_n\right) - f\left(0-h_n\right) \right| \geq \min\left\{2, 1+h_n\right\}$ for every $n \in \N$. Since $1+h_n \rightarrow 1 \neq 0$, $f\left(0+h_n\right) - f\left(0-h_n\right)$ does not converge to $0$ as $n \rightarrow \infty$. 
\end{example}

The function $f$ in Example \ref{WC_notimplies_SC_WSC} is not a weakly continuous function because it is not  weakly continuous at $a=\frac{1}{2}$: for every $\left( y_n \right) \in U_{\frac{1}{2}}(\R)$, $f\left(y_n\right)=1$ for every $n \in \N$, and therefore $\left(f\left(y_n\right)\right)_{n \in \N}$ does not converge to $f\left(\frac{1}{2}\right)$. The next example gives a function which is a weakly continuous function but is not a weakly symmetrically continuous function.

\begin{example} \label{WC_notimplies_SC_WSC2}
Let $A=\left\{ \frac{1}{n} \mid n \in \N \right\}$, $B= \left\{ -\frac{\sqrt{2}}{n} \mid n \in \N \right\}$, $C= \left\{\frac{\sqrt{2}}{n} \mid n \in \N \right\}$, and $D = A\cup B\cup C \cup\left\{0\right\}$. Define $f:D\rightarrow \R$ by
$$
f(x) = \begin{cases}
x, \quad & \text{if} \; x \in A\cup B\cup \left\{ 0 \right\}; \\
1, \quad & \text{if} \; x \in C. 
\end{cases}
$$
Let $a \in D-\left\{0\right\}$. Then $U_a(D)$, $L_a(D)$, and $S_a(D)$ are the empty sets. Therefore $f$ is weakly continuous at $a$, symmetrically continuous at $a$, and weakly symmetrically continuous at $a$. Similar to Example \ref{WC_notimplies_SC_WSC}, $f$ is weakly continuous at $0$. But $f$ is not weakly symmetrically continuous at $0$. To see this, let $\left(h_n\right) \in S_0\left(D\right)$. By the definition of $S_0\left(D\right)$, $h_n > 0$ and $0+h_n, 0-h_n \in D$ for every $n \in \N$. Therefore $h_n \in C$ for every $n \in \N$. Then $f\left(0+h_n\right)-f\left(0-h_n\right) = 1 + h_n \rightarrow 1 \neq 0$ as $n \rightarrow \infty$. Therefore $f$ is a weakly continuous function but is not a weakly symmetrically continuous function. 
\end{example}

\begin{example} \label{SC_WSC_notimplies_WC}
Define $f:\R \rightarrow \R$ by
$$
f(x) = \begin{cases}
0, \quad & \text{if} \; x = 0; \\
1, \quad & \text{if} \; x \neq 0.  
\end{cases} 
$$
For each $a \in \R$, $\lim_{x\rightarrow a^+} f(x) = \lim_{x\rightarrow a^-} f(x) = 1$. So by Lemma \ref{Prop2.2} and Theorem \ref{SC_implie_WSC}, $f$ is a symmetrically continuous function and is a weakly symmetrically continuous function. For each $\left(y_n\right) \in U_0(\R)$, $f(y_n) \rightarrow 1 \neq f(0)$. So $f$ is not a weakly continuous function.
\end{example}

\begin{theorem} \label{Relation1}
The following statements hold:
\begin{itemize} 
\item[(i)] $SC \subseteq WSC$ and $SC \nsubseteq WC$,
\item[(ii)] $WSC \nsubseteq SC \cup WC$,
\item[(iii)] $WSC \cap WC \nsubseteq SC$, and 
\item[(iv)] $WC \nsubseteq WSC$,
\end{itemize}
\end{theorem}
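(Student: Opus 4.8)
The plan is to assemble Theorem~\ref{Relation1} entirely from the material already established, since each clause is a repackaging of Theorem~\ref{SC_implie_WSC} together with one of the worked examples. First I would dispose of the containment $SC \subseteq WSC$ by citing Theorem~\ref{SC_implie_WSC} directly. For the remaining four non-inclusions and the one intersection statement, the strategy is to exhibit, for each, a witnessing function drawn from Examples~\ref{WSC_notim_SC}--\ref{SC_WSC_notimplies_WC} and to quote the properties verified there.

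Concretely: for $SC \nsubseteq WC$ in~(i), I would point to the function $f$ of Example~\ref{SC_WSC_notimplies_WC}, which is a symmetrically continuous function (hence in $SC$) but fails weak continuity at $0$ because $f(y_n)\to 1\neq f(0)$ for every $(y_n)\in U_0(\R)$; so $f\in SC\setminus WC$. For~(ii), $WSC\nsubseteq SC\cup WC$, I would use the function of Example~\ref{WSC_notim_SC}: it was shown there to be a weakly symmetrically continuous function, yet it is not symmetrically continuous (failure at $0$, via the sequence $\sqrt 2/n$ and Lemma~\ref{LimitSym}) and not a weakly continuous function (failure at $1$, since $f(y_n)=1\neq f(1)=0$ for $(y_n)\in U_1(\R)$); hence it lies in $WSC$ but in neither $SC$ nor $WC$. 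For~(iii), $WSC\cap WC\nsubseteq SC$, the relevant witness is the function of Example~\ref{WSC_notim_SC2}, which is simultaneously a weakly symmetrically continuous and a weakly continuous function but fails symmetric continuity at $0$ by the same $\sqrt2/n$ argument. Finally, for~(iv), $WC\nsubseteq WSC$, I would cite Example~\ref{WC_notimplies_SC_WSC2}: that $f$ is a weakly continuous function but is not weakly symmetrically continuous, because every $(h_n)\in S_0(D)$ forces $h_n\in C$ and then $f(0+h_n)-f(0-h_n)=1+h_n\to 1\neq 0$.

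Since all the verifications have already been carried out in the examples, there is no genuine obstacle here; the only thing to be careful about is bookkeeping — making sure each chosen example actually certifies \emph{both} the membership and the non-membership claimed, and that the membership is asserted \emph{as a function} (i.e.\ at every point of its domain), not merely at the single point where the interesting behaviour occurs. Example~\ref{WC_notimplies_SC_WSC} is deliberately \emph{not} used for~(iv), since the $f$ there is not a weakly continuous function (it fails at $\tfrac12$); the sharper Example~\ref{WC_notimplies_SC_WSC2} is the correct reference. I would therefore write the proof as a short enumeration mirroring the statement, one or two sentences per item, each a pointer to the appropriate example plus (for the $SC\subseteq WSC$ part) Theorem~\ref{SC_implie_WSC}.

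\begin{proof}
Statement~(i): the inclusion $SC\subseteq WSC$ is Theorem~\ref{SC_implie_WSC}. For $SC\nsubseteq WC$, the function $f$ of Example~\ref{SC_WSC_notimplies_WC} is a symmetrically continuous function but is not a weakly continuous function, since for every $(y_n)\in U_0(\R)$ we have $f(y_n)\to 1\neq f(0)$.

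Statement~(ii): the function $f$ of Example~\ref{WSC_notim_SC} is a weakly symmetrically continuous function, yet it is not symmetrically continuous at $0$ (because $f(\tfrac{\sqrt 2}{n})-f(-\tfrac{\sqrt 2}{n})=2$ for all $n$, so Lemma~\ref{LimitSym} applies) and it is not a weakly continuous function (because $f(1)=0$ while $f(y_n)=1$ for every $(y_n)\in U_1(\R)$). Hence $f\in WSC$ but $f\notin SC\cup WC$.

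Statement~(iii): the function $f$ of Example~\ref{WSC_notim_SC2} is both a weakly symmetrically continuous function and a weakly continuous function, but it is not symmetrically continuous at $0$ by the same argument with the sequence $\left(\tfrac{\sqrt 2}{n}\right)$ and Lemma~\ref{LimitSym}. Thus $f\in WSC\cap WC$ while $f\notin SC$.

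Statement~(iv): the function $f$ of Example~\ref{WC_notimplies_SC_WSC2} is a weakly continuous function but is not a weakly symmetrically continuous function, since every $(h_n)\in S_0(D)$ must satisfy $h_n\in C$ for all $n$, and then $f(0+h_n)-f(0-h_n)=1+h_n\to 1\neq 0$. Therefore $WC\nsubseteq WSC$.
\end{proof}
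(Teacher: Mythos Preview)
Your proof is correct and matches the paper's own argument exactly: each of (i)--(iv) is established by citing Theorem~\ref{SC_implie_WSC} together with the same examples (\ref{SC_WSC_notimplies_WC}, \ref{WSC_notim_SC}, \ref{WSC_notim_SC2}, \ref{WC_notimplies_SC_WSC2}, respectively) that the paper uses. Your added care in noting why Example~\ref{WC_notimplies_SC_WSC} is unsuitable for~(iv) is spot on.
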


\begin{proof} 
(i) follows from Theorem \ref{SC_implie_WSC} and Example \ref{SC_WSC_notimplies_WC}, (ii) follows from Example \ref{WSC_notim_SC}, (iii)  follows from Example \ref{WSC_notim_SC2}, and (iv) follows from Example \ref{WC_notimplies_SC_WSC2}.
\end{proof}

\begin{corollary}
The relations (R1), (R2) and (R3) hold.
\end{corollary}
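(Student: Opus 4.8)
The plan is to obtain each of the six set-theoretic assertions packaged in (R1)--(R3) as an immediate consequence of the four statements established in Theorem \ref{Relation1}, together with two trivial observations about inclusions: (a) if $X \nsubseteq Y \cup Z$, then $X \nsubseteq Y$ and $X \nsubseteq Z$; and (b) if $X \cap W \nsubseteq Y$, then $X \nsubseteq Y$ (and $W \nsubseteq Y$). Both are proved by contraposition in one line each, so no real work is involved.

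First I would dispatch the clauses that appear verbatim in Theorem \ref{Relation1}: the inclusion $SC \subseteq WSC$ (half of (R1)) and the non-inclusion $SC \nsubseteq WC$ (half of (R2)) are exactly Theorem \ref{Relation1}(i), while $WC \nsubseteq WSC$ (half of (R3)) is exactly Theorem \ref{Relation1}(iv). Next I would apply observation (a) to Theorem \ref{Relation1}(ii), namely $WSC \nsubseteq SC \cup WC$, to extract simultaneously $WSC \nsubseteq SC$ (the remaining half of (R1)) and $WSC \nsubseteq WC$ (the remaining half of (R3)). Finally I would apply observation (b) to Theorem \ref{Relation1}(iii), namely $WSC \cap WC \nsubseteq SC$, to extract $WC \nsubseteq SC$, which completes (R2).

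There is essentially no obstacle here beyond careful bookkeeping: the only thing to verify is that the matching above is exhaustive, i.e. that each of the six clauses in (R1), (R2), (R3) has been accounted for exactly once, and that the implications (a) and (b) are applied in the correct direction (from the weaker-looking statement of Theorem \ref{Relation1} to the pairwise non-inclusions stated in the relations). No new example, construction, or estimate is needed, since all the required witnesses were already built in Examples \ref{WSC_notim_SC}--\ref{SC_WSC_notimplies_WC} and invoked in the proof of Theorem \ref{Relation1}.
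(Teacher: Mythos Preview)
Your proposal is correct and follows essentially the same approach as the paper: you derive each clause of (R1)--(R3) from exactly the same parts of Theorem \ref{Relation1} that the paper uses, namely (i) and (ii) for (R1), (i) and (iii) for (R2), and (ii) and (iv) for (R3). The only difference is cosmetic---you make explicit the one-line set-theoretic observations (a) and (b) that the paper leaves to the reader.
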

\begin{proof}
(R1) follows from (i) and (ii) of Theorem \ref{Relation1}. (R2) follows from (i) and (iii) of Theorem \ref{Relation1}. (R3) follows from (ii) and (iv) of Theorem \ref{Relation1}.
\end{proof}

%------------------------------------- Combinations of Functions ----------------------------------------------------------
\section{Combinations of Functions}

Throughout, let $A$ and $B$ be nonempty subsets of $\R$ and let $a \in A$. The following statements are given in \cite{R_Sym} and \cite{Sym}. 
\begin{itemize}
\item[(1)] If $f, g:A\rightarrow \R$ are symmetrically continuous at $a$ and $c \in \R $, then $\left| f \right|$, $cf$, $f+g$, $f-g$, $\max\left\{f,g\right\}$, and $\min\left\{f,g\right\}$ are symmetrically continuous at $a$.
\item[(2)] Let $f, g:A\rightarrow \R$ be symmetrically continuous at $a$. If $f$ and $g$ are locally bounded at $a$, then $fg$ is symmetrically continuous at $a$.
\item[(3)] Let $f:A\rightarrow \R$ and $g:B\rightarrow \R$ be functions such that $f(A) \subseteq B$. If $f$ is symmetrically continuous at $a$, and $g$ is uniformly continuous on $B$, then $g\circ f$ is symmetrically continuous at $a$. 
\end{itemize}

Similar statements for weakly continuous functions are given in \cite{Weak}. In this article, we will investigate this kind of  properties for weakly symmetrically continuous functions.

\begin{theorem} \label{Constant_WS}
Let $f:A\rightarrow \R$ be weakly symmetrically continuous at $a$ and $c \in \R $. Then $\left| f \right|$ and $cf$ are weakly symmetrically continuous at $a$.
\end{theorem}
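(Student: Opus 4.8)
The plan is to unwind Definition \ref{Def_WSC} directly and split into two cases according to whether $S_a(A)$ is empty. If $S_a(A) = \emptyset$, then by Remark \ref{remark1} every function defined on $A$ is weakly symmetrically continuous at $a$; in particular $\left|f\right|$ and $cf$ are, and there is nothing to prove. So I would assume $S_a(A) \neq \emptyset$. Since $f$ is weakly symmetrically continuous at $a$, I can fix a single sequence $\left(h_n\right) \in S_a(A)$ with $\lim_{n\rightarrow\infty} f(a+h_n) - f(a-h_n) = 0$, and the point is that this one sequence simultaneously witnesses weak symmetric continuity for both $\left|f\right|$ and $cf$.

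For $cf$, I would simply invoke the algebra of limits: $\lim_{n\rightarrow\infty} (cf)(a+h_n) - (cf)(a-h_n) = c\lim_{n\rightarrow\infty}\bigl(f(a+h_n) - f(a-h_n)\bigr) = 0$, so $\left(h_n\right)$ is the required sequence for $cf$. For $\left|f\right|$, I would use the reverse triangle inequality $\bigl|\,\left|f(a+h_n)\right| - \left|f(a-h_n)\right|\,\bigr| \le \left|f(a+h_n) - f(a-h_n)\right|$ valid for each $n$; since the right-hand side tends to $0$, the squeeze theorem gives $\lim_{n\rightarrow\infty} \left|f\right|(a+h_n) - \left|f\right|(a-h_n) = 0$, so again $\left(h_n\right)$ works. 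Both verifications then conclude by applying Definition \ref{Def_WSC}.

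I do not expect a genuine obstacle here: the argument is a routine combination of Definition \ref{Def_WSC}, the algebra of limits, and the reverse triangle inequality, with the empty-$S_a(A)$ case dispatched by Remark \ref{remark1}. The only subtlety worth highlighting — and the reason this is cleaner than what one would face for, say, $f+g$ — is that $\left|f\right|$ and $cf$ are built from the \emph{single} function $f$, so a common witnessing sequence is immediately available; no coordination between two separately chosen sequences is needed.
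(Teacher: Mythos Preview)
Your proof is correct and is essentially the same as the paper's: both assume $S_a(A)\neq\emptyset$, fix a single witnessing sequence $(h_n)$ from the weak symmetric continuity of $f$, apply the reverse triangle inequality for $|f|$, and use linearity of limits for $cf$. Your explicit handling of the $S_a(A)=\emptyset$ case via Remark~\ref{remark1} and your closing remark about why a single sequence suffices here (in contrast to $f+g$) are nice touches but do not change the underlying argument.
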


\begin{proof}
Assume that $S_{a}(A) \neq \emptyset$. Then there exists $\left(h_n\right) \in S_{a}(A)$ such that $\lim_{n \rightarrow \infty} f(a+h_n)-f(a-h_n) = 0$. 	
Since
\begin{align*}
	\left|\left| f\right|(a+h_n) - \left| f\right|(a-h_n) \right|= \left|\left| f(a+h_n) \right| - \left| f(a-h_n) \right|\right| 
	\leq \left| f(a+h_n) - f(a-h_n) \right|,
\end{align*}
we obtain that $\lim_{n \rightarrow \infty} \left| f\right|(a+h_n)  - \left| f\right|(a-h_n) = 0$.
In addition,  
\begin{align*}
	\lim_{n \rightarrow \infty} (cf)(a+h_n) - (cf)(a-h_n)  = c \lim_{n \rightarrow \infty}  f(a+h_n) - f(a-h_n)  = 0.
\end{align*} 
This shows that $\left|f\right|$ and $cf$ are weakly symmetrically continuous at $a$. 
\end{proof}

\begin{theorem} \label{Sum_WS}
Let $f:A\rightarrow \R$ be weakly symmetrically continuous at $a$ and let $g:A\rightarrow \R$ be symmetrically continuous at $a$. Then $f+g$, $f-g$, $\max\{ f,g \}$, and $\min\{ f,g \}$ are weakly symmetrically continuous at $a$.
\end{theorem}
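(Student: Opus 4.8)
The plan is to reduce everything to a single, carefully chosen sequence. First, if $S_a(A) = \emptyset$, then by Remark \ref{remark1} every function on $A$ is weakly symmetrically continuous at $a$, so all four combinations are; hence I may assume $S_a(A) \neq \emptyset$. Since $f$ is weakly symmetrically continuous at $a$, fix $(h_n) \in S_a(A)$ with $\lim_{n\to\infty} f(a+h_n) - f(a-h_n) = 0$. The crucial observation is that symmetric continuity of $g$ gives control along \emph{this very same} sequence: because $h_n \to 0$ and $a+h_n, a-h_n \in A$ for all $n$, Lemma \ref{LimitSym} yields $\lim_{n\to\infty} g(a+h_n) - g(a-h_n) = 0$. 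So both symmetric differences vanish simultaneously along $(h_n)$.

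The cases $f+g$ and $f-g$ are then immediate, since
\[
(f\pm g)(a+h_n) - (f\pm g)(a-h_n) = \bigl[f(a+h_n)-f(a-h_n)\bigr] \pm \bigl[g(a+h_n)-g(a-h_n)\bigr] \longrightarrow 0,
\]
and $(h_n)\in S_a(A)$, so this witnesses weak symmetric continuity of $f\pm g$ at $a$. For $\max$ and $\min$ I would invoke the elementary inequalities
\[
\bigl|\max\{s,t\} - \max\{u,v\}\bigr| \le \max\{|s-u|,|t-v|\}, \qquad \bigl|\min\{s,t\} - \min\{u,v\}\bigr| \le \max\{|s-u|,|t-v|\},
\]
applied with $s = f(a+h_n)$, $t = g(a+h_n)$, $u = f(a-h_n)$, $v = g(a-h_n)$; the right-hand side tends to $0$, so $\max\{f,g\}$ and $\min\{f,g\}$ are weakly symmetrically continuous at $a$ along the same $(h_n)$. (Alternatively one could write $\max\{f,g\} = \tfrac12(f+g+|f-g|)$ and $\min\{f,g\} = \tfrac12(f+g-|f-g|)$ and combine the cases just done with Theorem \ref{Constant_WS}, but one must then keep track of the common sequence, which the direct argument makes transparent.)

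There is no real computational obstacle here; the one point requiring care is the \emph{order} of the argument. Weak symmetric continuity of $f$ alone does not single out a sequence that is also usable for $g$, whereas symmetric continuity of $g$ works along \emph{every} admissible null sequence. So the reasoning must be: extract $(h_n)$ from the hypothesis on $f$, then apply Lemma \ref{LimitSym} to $g$ along that $(h_n)$. This asymmetry is exactly why the hypothesis on $g$ cannot be relaxed to weak symmetric continuity — one cannot expect a single sequence to serve two independently chosen weakly symmetrically continuous functions at once.
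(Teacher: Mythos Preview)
Your argument is correct and follows essentially the same route as the paper: extract the witnessing sequence $(h_n)$ from the weak symmetric continuity of $f$, invoke Lemma~\ref{LimitSym} to force $g(a+h_n)-g(a-h_n)\to 0$ along that same sequence, and then combine. The only cosmetic difference is in the $\max/\min$ step: the paper uses the identity $\max\{f,g\}=\tfrac12(f+g+|f-g|)$ and a chain of triangle inequalities, whereas you use the cleaner pointwise bound $|\max\{s,t\}-\max\{u,v\}|\le\max\{|s-u|,|t-v|\}$ --- which you yourself flag as equivalent to the alternative.
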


\begin{proof}
Assume that $S_{a}(A) \neq \emptyset$. Then there exists $\left(h_n\right) \in S_{a}(A)$ such that 
$\lim_{n \rightarrow \infty} f(a+h_n)-f(a-h_n) = 0$.	
By Lemma \ref{LimitSym}, $g(a+h_n)-g(a-h_n)$ also converges to $0$. Then 
\begin{align*}
& \lim_{n \rightarrow \infty} (f+g)(a+h_n)-(f+g)(a-h_n) \\
& \qquad = \lim_{n \rightarrow \infty} f(a+h_n)-f(a-h_n) + \lim_{n \rightarrow \infty} g(a+h_n)-g(a-h_n) = 0.
\end{align*}
Similarly, $\lim_{n \rightarrow \infty} (f-g)(a+h_n)-(f-g)(a-h_n) = 0$. This shows that $f+g$ and $f-g$ are weakly symmetrically continuous at $a$.

Note that $\max\{ f,g \} = \frac{f+g+\left|f-g\right|}{2}$ and $\min\{ f,g \} = \frac{f+g-\left|f-g\right|}{2}$. 
Then
\begin{align*}
&\left|\max\{ f,g \}(a+h_n)-\max\{ f,g \}(a-h_n)\right| \\
	&\qquad \leq \left| \frac{f(a+h_n)-f(a-h_n)}{2} \right| + \left| \frac{g(a+h_n)-g(a-h_n)}{2} \right| \\ 
	&\qquad \quad + \left|\frac{\left|f-g\right|(a+h_n)-\left|f-g\right|(a-h_n)}{2} \right| \\ 
	&\qquad \leq \left| \frac{f(a+h_n)-f(a-h_n)}{2} \right| + \left| \frac{g(a+h_n)-g(a-h_n)}{2} \right| \\ 
	&\qquad \quad + \left|\frac{\left(f-g\right)(a+h_n)-\left(f-g\right)(a-h_n)}{2} \right| \\
	&\qquad \leq \left| \frac{f(a+h_n)-f(a-h_n)}{2} \right| + \left| \frac{g(a+h_n)-g(a-h_n)}{2} \right| \\ 
	&\qquad \quad + \left| \frac{f(a+h_n)-f(a-h_n)}{2} \right| + \left| \frac{g(a+h_n)-g(a-h_n)}{2} \right|  \\
	&\qquad = \left| f(a+h_n)-f(a-h_n) \right| + \left| g(a+h_n)-g(a-h_n)\right| \rightarrow 0
\end{align*}
as $n\rightarrow \infty$. Therefore $\max\{ f,g \}$ is weakly symmetrically continuous at $a$. Similarly, $\min\{ f,g \}$ is weakly symmetrically continuous at $a$.
\end{proof}

Although $f$ and $g$ are weakly symmetrically continuous at $a$, $f+g$, $f-g$, $\max\{ f,g \}$, and $\min\{ f,g \}$ may or may not be weakly symmetrically continuous at $a$. So we cannot replace the symmetric continuity of the function $g$ in Theorem \ref{Sum_WS} by weak symmetric continuity. We show this in the next example.

\begin{example}
Let $A = \left\{ \frac{1}{n} \mid n \in \Z- \left\{0\right\} \right\}$ and $B = \left\{ \frac{\sqrt{2}}{n} \mid n \in \Z-\left\{ 0 \right\} \right\}$ . Define $f, g:\R \rightarrow \R$ by
$$
f(x) = \begin{cases}
x, \quad & \text{if} \; x \in A\cup\left\{ 0 \right\}; \\
-1, \quad & \text{if} \; x > 0 \; \text{and} \; x \notin A; \\
1, \quad & \text{if} \; x < 0 \; \text{and} \; x \notin A,  
\end{cases}
\; \text{and} \;
g(x) = \begin{cases}
x, \quad & \text{if} \; x \in B\cup\left\{ 0 \right\}; \\
-2, \quad & \text{if} \; x > 0 \; \text{and} \; x \notin B; \\
2, \quad & \text{if} \; x < 0 \; \text{and} \; x \notin B.  
\end{cases} 
$$
Since $f\left( \frac{1}{n} \right) - f\left(-\frac{1}{n} \right) = \frac{2}{n} \rightarrow 0$ and $g\left( \frac{\sqrt{2}}{n} \right) - g\left(-\frac{\sqrt{2}}{n} \right) = \frac{2\sqrt{2}}{n} \rightarrow 0$ as $n \rightarrow \infty$. So $f$ and $g$ are weakly symmetrically continuous at $0$. We obtain
$$
(f+g)(x) = \begin{cases}
-3, \quad & \text{if} \; x > 0 \; \text{and} \; x \notin A\cup B; \\
3, \quad & \text{if} \; x < 0 \; \text{and} \; x \notin A\cup B;  \\
x-2, \quad & \text{if} \; x > 0 \; \text{and} \; x \in A; \\
x+2, \quad & \text{if} \; x < 0 \; \text{and} \; x \in A; \\
x-1, \quad & \text{if} \; x > 0 \; \text{and} \; x \in B; \\
x+1, \quad & \text{if} \; x < 0 \; \text{and} \; x \in B; \\
0, \quad & \text{if} \; x = 0, 
\end{cases} 
$$
$$
(f-g)(x) = \begin{cases}
1, \quad & \text{if} \; x > 0 \; \text{and} \; x \notin A\cup B; \\
-1, \quad & \text{if} \; x < 0 \; \text{and} \; x \notin A\cup B;  \\
x+2, \quad & \text{if} \; x > 0 \; \text{and} \; x \in A; \\
x-2, \quad & \text{if} \; x < 0 \; \text{and} \; x \in A; \\
-x-1, \quad & \text{if} \; x > 0 \; \text{and} \; x \in B; \\
-x+1, \quad & \text{if} \; x < 0 \; \text{and} \; x \in B; \\
0, \quad & \text{if} \; x = 0, 
\end{cases} 
$$
$$
\max\left\{f,g\right\}(x) = \begin{cases}
-1, \quad & \text{if} \; x > 0 \; \text{and} \; x \notin A\cup B; \\
2, \quad & \text{if} \; x < 0 \; \text{and} \; x \notin A\cup B;  \\
x, \quad & \text{if} \; x > 0 \; \text{and} \; x \in A; \\
2, \quad & \text{if} \; x < 0 \; \text{and} \; x \in A; \\
x, \quad & \text{if} \; x > 0 \; \text{and} \; x \in B; \\
1, \quad & \text{if} \; x < 0 \; \text{and} \; x \in B; \\
0, \quad & \text{if} \; x = 0, 
\end{cases} 
$$
$$
\min\left\{f,g\right\}(x) = \begin{cases}
-2, \quad & \text{if} \; x > 0 \; \text{and} \; x \notin A\cup B; \\
1, \quad & \text{if} \; x < 0 \; \text{and} \; x \notin A\cup B;  \\
-2, \quad & \text{if} \; x > 0 \; \text{and} \; x \in A; \\
x, \quad & \text{if} \; x < 0 \; \text{and} \; x \in A; \\
-1, \quad & \text{if} \; x > 0 \; \text{and} \; x \in B; \\
x, \quad & \text{if} \; x < 0 \; \text{and} \; x \in B; \\
0, \quad & \text{if} \; x = 0. 
\end{cases} 
$$
Let $\left(h_n\right) \in S_0(\R)$. Similar to Example \ref{WC_notimplies_SC_WSC}, for each $n \in \N$,
$$
\left(f+g\right)(h_n) - \left(f+g\right)(-h_n) = \begin{cases}
6, \quad & \text{if} \; h_n > 0 \; \text{and} \; h_n \notin A\cup B; \\ 
2h_n-4, \quad & \text{if} \; h_n > 0 \; \text{and} \; h_n \in A; \\
2h_n-2, \quad & \text{if} \; h_n > 0 \; \text{and} \; h_n \in B.
\end{cases}
$$
So $\left(f+g\right)(h_n) - \left(f+g\right)(-h_n) \in \left\{ 6, 2h_n-4, 2h_n-2 \right\}$ for every $n \in \N$. Since $2h_n-4 \rightarrow -4$ and $2h_n-2 \rightarrow -2$ as $n \rightarrow \infty$, $\left(f+g\right)(h_n) - \left(f+g\right)(-h_n)$ does not converge to zero as $n \rightarrow \infty$. Similarly, we obtain that for every $n \in \N$, 
\begin{align*}
&\left(f-g\right)(h_n) - \left(f-g\right)(-h_n) \in \left\{2,2h_n+4,-2h_n-2\right\}, \\ 
&\max\left\{f,g\right\}(h_n) - \max\left\{f,g\right\}(-h_n) \in \left\{-3, h_n-2, h_n-1\right\}, \quad \text{and} \\ 
&\min\left\{f,g\right\}(h_n) - \min\left\{f,g\right\}(-h_n) \in \left\{-3, -h_n-2, -h_n-1\right\}.
\end{align*}
Therefore they do not converge to zero as $n \rightarrow \infty$. This shows that $f+g$, $f-g$, $\max\left\{f,g\right\}$, and $\min\left\{f,g\right\}$ are not weakly symmetrically continuous at $0$.
\end{example}

Recall that a function $f:A\rightarrow \R$ is said to be locally bounded at $a$ if there exist $\delta > 0$ and $M > 0$ such that $\left|f(x)\right| < M$ for all $x \in \left(a-\delta,a+\delta \right)\cap A$. We have the following result.

\begin{theorem} \label{Product_WS}
Let $f:A\rightarrow \R$ be weakly symmetrically continuous at $a$ and let $g:A\rightarrow \R$ be symmetrically continuous at $a$. If $f$ and $g$ are locally bounded at $a$, then $fg$ is weakly symmetrically continuous at $a$.
\end{theorem}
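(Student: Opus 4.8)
The plan is to dispose of the trivial case first and then run the standard telescoping argument for a product of symmetric difference expressions, using the symmetric continuity of $g$ to control one factor and local boundedness of both functions to control the other.

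First I would handle the case $S_a(A) = \emptyset$: here Remark \ref{remark1} already makes $fg$ weakly symmetrically continuous at $a$, so I may assume $S_a(A) \neq \emptyset$. Using the weak symmetric continuity of $f$, fix a sequence $(h_n) \in S_a(A)$ with $f(a+h_n) - f(a-h_n) \to 0$. Since $h_n \to 0$ and $a \pm h_n \in A$ for every $n$, Lemma \ref{LimitSym} applied to $g$ gives $g(a+h_n) - g(a-h_n) \to 0$ along this \emph{same} sequence; this is the point that lets me avoid having to produce a possibly different witnessing sequence for the product. Then I would write
\[
(fg)(a+h_n) - (fg)(a-h_n) = f(a+h_n)\bigl(g(a+h_n) - g(a-h_n)\bigr) + g(a-h_n)\bigl(f(a+h_n) - f(a-h_n)\bigr).
\]
By local boundedness, choose $\delta > 0$ and $M > 0$ with $|f(x)| < M$ and $|g(x)| < M$ for all $x \in (a-\delta, a+\delta)\cap A$; since $h_n \to 0$, for all sufficiently large $n$ we have $|h_n| < \delta$, hence $|f(a+h_n)| < M$ and $|g(a-h_n)| < M$. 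Therefore, for large $n$,
\[
\bigl|(fg)(a+h_n) - (fg)(a-h_n)\bigr| \le M\bigl|g(a+h_n) - g(a-h_n)\bigr| + M\bigl|f(a+h_n) - f(a-h_n)\bigr| \to 0,
\]
so $(h_n) \in S_a(A)$ witnesses the weak symmetric continuity of $fg$ at $a$.

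I do not expect a genuine obstacle here: the argument is the classical product identity combined with Lemma \ref{LimitSym}. The only point of care is that local boundedness is hypothesized only in a neighborhood of $a$, so one must invoke $h_n \to 0$ to guarantee that the "bounded" factors $f(a+h_n)$ and $g(a-h_n)$ eventually lie below $M$; since only the limit of the symmetric difference is needed, restricting attention to large $n$ is harmless.
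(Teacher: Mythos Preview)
Your argument is correct and matches the paper's proof essentially line for line: both pick the witnessing sequence $(h_n)$ from the weak symmetric continuity of $f$, invoke Lemma \ref{LimitSym} to force $g(a+h_n)-g(a-h_n)\to 0$ along that same sequence, apply the product telescoping identity, and use local boundedness (valid for large $n$ since $h_n\to 0$) to bound the leftover factors. The only cosmetic difference is that you explicitly cite Remark \ref{remark1} for the case $S_a(A)=\emptyset$, whereas the paper simply begins by assuming $S_a(A)\neq\emptyset$.
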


\begin{proof}
Assume that $f$ and $g$ are locally bounded at $a$ and $S_{a}(A) \neq \emptyset$. Then there exists $\left(h_n\right) \in S_{a}(A)$ such that 
$\lim_{n \rightarrow \infty} \left|f(a+h_n)-f(a-h_n)\right| = 0$. By Lemma \ref{LimitSym}, $\lim_{n \rightarrow \infty} \left|g(a+h_n)-g(a-h_n)\right| = 0$. Since $f$ and $g$ are locally bounded at $a$, there are positive real numbers $M$ and $\delta$ such that 
$ \left| f(x) \right| < M$ and $\left| g(x) \right| < M$, for every $x \in \left(a-\delta , a+\delta \right) \cap A$.
Since $\left(h_n\right) \in S_{a}(A)$, we can choose $N \in \N$ such that $a+h_n$ and $a-h_n$ are in $\left(a-\delta , a+\delta \right) \cap A$ for all $n \geq N$. Writing $(fg)(a+h_n)- (fg)(a-h_n) = f(a+h_n)g(a+h_n)-f(a+h_n)g(a-h_n)+f(a+h_n)g(a-h_n)-f(a-h_n)g(a-h_n)$, we obtain that 
\begin{align*}
&\left| (fg)(a+h_n)- (fg)(a-h_n) \right| \\ 
	&\qquad \leq \left| f(a+h_n)\right| \left|g(a+h_n)-g(a-h_n)\right|+\left|f(a+h_n)-f(a-h_n)\right| \left|g(a-h_n) \right| \\
	&\qquad \leq M \left|g(a+h_n)-g(a-h_n)\right| + M \left|f(a+h_n)-f(a-h_n)\right|, 
\end{align*}
for every $n \geq \N$. This implies that $\lim_{n \rightarrow \infty} \left|(fg)(a+h_n)-(fg)(a-h_n)\right| = 0$. Hence $fg$ is weakly symmetrically continuous at $a$.
\end{proof}

Next we give an example to show that local boundedness of the functions $f$ and $g$ in Theorem \ref{Product_WS} cannot be omitted.

\begin{example} \label{Not_locallybounded}
Define $f,g:\R \rightarrow \R$ by 
$$
f(x)=x \quad \text{and} \quad 
g(x)= \begin{cases}
\frac{1}{\left|x\right|}, \quad & \text{if} \quad x\neq 0; \\
0, \quad & \text{if} \quad x=0. 
\end{cases}
$$
Then $f$ and {g} are symmetrically continuous at $0$, $f$ is locally bounded at $0$ but $g$ is not locally bounded at $0$. By Theorem \ref{SC_implie_WSC}, $f$ is weakly symmetrically continuous at $0$. In addition, 
$$
\left(fg\right)(x)= \begin{cases}
1, \quad & \text{if} \quad x>0; \\
-1, \quad & \text{if} \quad x<0; \\
0, \quad & \text{if} \quad x=0. 
\end{cases}
$$
Therefore, for every $\left(h_n\right) \in S_0(\R)$, $\left(fg\right)(h_n) - \left(fg\right)(- h_n) = 2 \nrightarrow 0$ as $n \rightarrow \infty$. Hence, $fg$ is not weakly symmetrically continuous at $0$.
\end{example}
  
Next we give functions $f$ and $g$ such that $f$ and $g$ are weakly symmetrically continuous at $0$ and locally bounded at $0$ but $fg$ is not weakly symmetrically continuous at $0$. Therefore the symmetric continuity of the function $g$ in Theorem \ref{Product_WS} cannot be replaced by weak symmetric continuity.

\begin{example}
Let $A = \left\{ \frac{1}{n} \mid n \in \Z- \left\{0\right\} \right\}$, $B = \left\{ \frac{\sqrt{2}}{n} \mid n \in \Z-\left\{ 0 \right\} \right\}$, and $C = A\cup B\cup\left\{0\right\}$. Define $f, g:C \rightarrow \R$ by
$$
f(x) = \begin{cases}
1, \quad & \text{if} \; x \in B\cup\left\{ 0 \right\}; \\
\frac{1}{x^2+1}, \quad & \text{if} \; x > 0 \; \text{and} \; x \in A; \\
-\frac{1}{x^2+1}, \quad & \text{if} \; x < 0 \; \text{and} \; x \in A,  
\end{cases}
$$
and
$$
g(x) = \begin{cases}
1, \quad & \text{if} \; x \in A\cup\left\{ 0 \right\}; \\
\frac{1}{x^2+2}, \quad & \text{if} \; x > 0 \; \text{and} \; x \in B; \\
-\frac{1}{x^2+2}, \quad & \text{if} \; x < 0 \; \text{and} \; x \in B.  
\end{cases} 
$$
Observe that $f\left(\frac{\sqrt{2}}{n}\right) - f\left(- \frac{\sqrt{2}}{n}\right) = 0$ and $g\left(\frac{1}{n}\right) - g\left(- \frac{1}{n}\right) = 0$ for every $n \in \N$. Therefore $f$ and $g$ are weakly symmetrically continuous at $0$. It is easy to see that $f$ and $g$ are bounded. So $f$ and $g$ are locally bounded at $0$. However, 
$$
\left(fg\right)(x) = \begin{cases}
\frac{1}{x^2+1}, \quad & \text{if} \; x > 0 \; \text{and} \; x \in A; \\
-\frac{1}{x^2+1}, \quad & \text{if} \; x < 0 \; \text{and} \; x \in A; \\
\frac{1}{x^2+2}, \quad & \text{if} \; x > 0 \; \text{and} \; x \in B; \\
-\frac{1}{x^2+2}, \quad & \text{if} \; x < 0 \; \text{and} \; x \in B; \\
1, \quad & \text{if} \; x=0.
\end{cases}
$$
So for every $\left(h_n\right) \in S_0(\R)$, we have $\left(fg\right)(h_n) - \left(fg\right)(- h_n) \in \left\{\frac{2}{\left(h_n\right)^2+1}, \frac{2}{\left(h_n\right)^2+2}\right\}$ for every $n \in \N$. So $\left(\left(fg\right)(h_n) - \left(fg\right)(- h_n)\right)$ does not converge to $0$ as $n \rightarrow \infty$. Therefore $fg$ is not weakly symmetrically continuous at $0$.
\end{example}

Next we study symmetric continuity, weak symmetric continuity and the quotient of functions.

\begin{theorem}
Let $f:A\rightarrow \R$ be weakly symmetrically continuous at $a$. Assume that $f(x)\neq 0$ for all $x\in A$ and $\frac{1}{f}$ is locally bounded at $a$. Then $\frac{1}{f}$ is weakly symmetrically continuous at $a$.
\end{theorem}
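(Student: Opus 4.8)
The plan is to imitate the proof of Theorem \ref{Product_WS} almost verbatim. First, if $S_a(A)=\emptyset$, then by Remark \ref{remark1} the function $\frac{1}{f}$ is automatically weakly symmetrically continuous at $a$ and there is nothing to do, so I would assume $S_a(A)\neq\emptyset$. Since $f$ is weakly symmetrically continuous at $a$, pick a sequence $(h_n)\in S_a(A)$ with $\lim_{n\to\infty} f(a+h_n)-f(a-h_n)=0$; I claim this same sequence witnesses the weak symmetric continuity of $\frac{1}{f}$ at $a$.

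The one computation is the quotient identity, which is legitimate precisely because $f$ never vanishes on $A$:
\[
\frac{1}{f}(a+h_n)-\frac{1}{f}(a-h_n)=\frac{f(a-h_n)-f(a+h_n)}{f(a+h_n)\,f(a-h_n)},
\]
so that
\[
\left|\frac{1}{f}(a+h_n)-\frac{1}{f}(a-h_n)\right|
=\left|\frac{1}{f(a+h_n)}\right|\left|\frac{1}{f(a-h_n)}\right|\left|f(a+h_n)-f(a-h_n)\right|.
\]
Then I would invoke local boundedness of $\frac{1}{f}$ at $a$: choose $\delta>0$ and $M>0$ with $\left|\frac{1}{f(x)}\right|<M$ for all $x\in(a-\delta,a+\delta)\cap A$. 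Because $(h_n)\in S_a(A)$, in particular $h_n\to 0$, so there is $N\in\N$ such that $a+h_n,a-h_n\in(a-\delta,a+\delta)\cap A$ for all $n\geq N$; hence for $n\geq N$,
\[
\left|\frac{1}{f}(a+h_n)-\frac{1}{f}(a-h_n)\right|\leq M^2\left|f(a+h_n)-f(a-h_n)\right|\longrightarrow 0
\]
as $n\to\infty$, which gives the conclusion.

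I do not expect any genuine obstacle. The only points needing a moment's care are that the nonvanishing hypothesis is what makes the quotient identity meaningful, and that the local bound must be applied with $\delta$ small enough that \emph{both} $a+h_n$ and $a-h_n$ eventually lie in the chosen neighborhood of $a$ — the same bookkeeping already carried out in Theorem \ref{Product_WS}. It is worth remarking that, in contrast to Theorem \ref{Product_WS}, no local boundedness of $f$ itself is needed here, since the entire estimate is driven by the bound on $\frac{1}{f}$; nor is symmetric continuity of any auxiliary function required, because the numerator above is exactly $-(f(a+h_n)-f(a-h_n))$, which already tends to $0$ by the choice of $(h_n)$.
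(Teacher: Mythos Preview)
Your proposal is correct and follows essentially the same argument as the paper: assume $S_a(A)\neq\emptyset$, take a witnessing sequence $(h_n)$, write $\frac{1}{f}(a+h_n)-\frac{1}{f}(a-h_n)$ as a quotient, and bound it by $M^2|f(a+h_n)-f(a-h_n)|$ using the local bound on $\frac{1}{f}$ once $n$ is large enough. The additional commentary you include is sound but not present in the paper's proof.
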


\begin{proof}
Assume that $S_{a}(A) \neq \emptyset$. Then there exists $\left(h_n\right) \in S_{a}(A)$ such that $\lim_{n \rightarrow \infty} \left|f(a+h_n)-f(a-h_n)\right| = 0$. Since $\frac{1}{f}$ is locally bounded at $a$, there are positive real numbers $M$ and $\delta$ such that 
\begin{align*}
	\left| \frac{1}{f(x)} \right| < M \quad \text{for every $x \in \left(a-\delta , a+\delta \right) \cap A$.}
\end{align*}
Since $\left(h_n\right) \in S_{a}(A)$, we can choose $N \in \N$ such that $a+h_n$ and $a-h_n$ are in $\left(a-\delta , a+\delta \right) \cap A$  for all $n \geq N$. Then for every $n \geq \N$,
\begin{align*}
 \left| \frac{1}{f(a+h_n)} - \frac{1}{f(a-h_n)} \right|  =  \left| \frac{f(a+h_n) - f(a-h_n)}{f(a+h_n)f(a-h_n)} \right|  \leq  M^{2}\left|f(a+h_n) - f(a-h_n)\right|. 
\end{align*} 
This implies that $\lim_{n \rightarrow \infty} \left| \frac{1}{f(a+h_n)} - \frac{1}{f(a-h_n)} \right| = 0$. Hence $\frac{1}{f}$ is weakly symmetrically continuous at $a$.
\end{proof}

\begin{lemma} \cite[Lemma~2.3]{R_Sym} \label{LS}
Let $f:A\rightarrow \R$ be symmetrically continuous at $a$ and $f(x) \neq 0$ for every $x\in A$. If $\frac{1}{f}$ is locally bounded at $a$, then $\frac{1}{f}$ is symmetrically continuous at $a$.
\end{lemma}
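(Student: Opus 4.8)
The plan is to reduce everything to the sequential characterization of symmetric continuity in Lemma \ref{LimitSym}, mirroring the proof of the preceding theorem that $\frac1f$ is weakly symmetrically continuous; the only change is that here we get to work with \emph{every} admissible sequence rather than producing just one.

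First I would fix an arbitrary sequence $(h_n)$ in $\R$ with $a+h_n, a-h_n \in A$ for all $n \in \N$ and $h_n \to 0$. By Lemma \ref{LimitSym} applied to $f$, we get $f(a+h_n) - f(a-h_n) \to 0$. Next, using that $\frac1f$ is locally bounded at $a$, I would pick $M, \delta > 0$ with $\left|\frac{1}{f(x)}\right| < M$ for all $x \in (a-\delta, a+\delta) \cap A$, and choose $N \in \N$ with $|h_n| < \delta$ for $n \geq N$, so that both $a+h_n$ and $a-h_n$ lie in $(a-\delta, a+\delta) \cap A$ for such $n$. Then for $n \geq N$ the identity
\[
\frac{1}{f(a+h_n)} - \frac{1}{f(a-h_n)} = \frac{f(a-h_n) - f(a+h_n)}{f(a+h_n)\, f(a-h_n)}
\]
gives $\left|\frac{1}{f(a+h_n)} - \frac{1}{f(a-h_n)}\right| \leq M^2 \left|f(a+h_n) - f(a-h_n)\right| \to 0$. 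Hence $\frac{1}{f(a+h_n)} - \frac{1}{f(a-h_n)} \to 0$, and since $(h_n)$ was arbitrary, the other direction of Lemma \ref{LimitSym} yields that $\frac1f$ is symmetrically continuous at $a$. An equivalent route bypasses Lemma \ref{LimitSym}: given $\varepsilon > 0$, apply local boundedness to obtain $M, \delta_0$, apply symmetric continuity of $f$ with tolerance $\varepsilon/M^2$ to obtain $\delta_1$, set $\delta = \min\{\delta_0, \delta_1\}$, and run the same two-line estimate.

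There is no genuine obstacle: the statement is essentially the symmetric-continuity analogue of the quotient rule, and local boundedness of $\frac1f$ is precisely what controls the denominator $f(a+h_n)f(a-h_n)$ uniformly for small $h_n$. The only points that deserve a moment's care are that the sequences in Lemma \ref{LimitSym} need not have positive or nonzero terms (harmless, since $h_n = 0$ makes the relevant difference vanish) and that Lemma \ref{LimitSym} must be invoked twice — once to pass from symmetric continuity of $f$ to convergence of the differences, and once to deduce symmetric continuity of $\frac1f$ from convergence along every admissible sequence.
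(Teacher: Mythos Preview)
Your argument is correct. Note, however, that the paper does not supply its own proof of this lemma: it is quoted verbatim from \cite[Lemma~2.3]{R_Sym} and used as a black box, so there is nothing to compare against. Your proof is exactly the natural adaptation of the paper's proof of the immediately preceding theorem on weak symmetric continuity of $\frac{1}{f}$, replacing the single witnessing sequence by an arbitrary admissible sequence and invoking Lemma~\ref{LimitSym} in both directions; the alternative $\varepsilon$--$\delta$ route you sketch is equally valid and is presumably closer to what appears in \cite{R_Sym}.
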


\begin{theorem}
Let $f:A\rightarrow \R$ be weakly symmetrically continuous at $a$ and locally bounded at $a$. Let $g:A\rightarrow \R$ be symmetrically continuous at $a$. If $g(x)\neq 0$ for all $x \in A$ and $\frac{1}{g}$ is locally bounded at $a$, then $\frac{f}{g}$ is weakly symmetrically continuous at $a$.
\end{theorem}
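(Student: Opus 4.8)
The plan is to reduce everything to Theorem~\ref{Product_WS} by writing $\frac{f}{g} = f\cdot\frac{1}{g}$, which is legitimate as a pointwise identity precisely because $g(x)\neq 0$ for all $x\in A$. So the strategy is: upgrade $\frac{1}{g}$ from "locally bounded" to "symmetrically continuous", and then feed the pair $\bigl(f,\tfrac{1}{g}\bigr)$ into Theorem~\ref{Product_WS}.

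First I would invoke Lemma~\ref{LS}: since $g$ is symmetrically continuous at $a$, $g(x)\neq 0$ for all $x\in A$, and $\frac{1}{g}$ is locally bounded at $a$, the lemma yields that $\frac{1}{g}$ is symmetrically continuous at $a$. Second, I would collect the remaining hypotheses: $f$ is weakly symmetrically continuous at $a$ and locally bounded at $a$ (given), and $\frac{1}{g}$ is locally bounded at $a$ (given). Third, I would apply Theorem~\ref{Product_WS} with $f$ in the role of the weakly symmetrically continuous factor and $\frac{1}{g}$ in the role of the symmetrically continuous factor, both locally bounded at $a$; this gives that $f\cdot\frac{1}{g}=\frac{f}{g}$ is weakly symmetrically continuous at $a$, which is exactly the claim.

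I do not expect any real obstacle here, since all the work was already done in Lemma~\ref{LS} and Theorems~\ref{Product_WS}. The only points that need a moment of care are purely bookkeeping: checking that $\frac{f}{g}$ is genuinely the pointwise product $f\cdot\frac{1}{g}$ on $A$ (needs $g$ nowhere zero, which is assumed), and checking that the two local boundedness hypotheses in the statement match exactly the two local boundedness hypotheses required by Theorem~\ref{Product_WS}. The case $S_a(A)=\emptyset$ requires no separate treatment, as it is already absorbed into Definition~\ref{Def_WSC} and hence into Theorem~\ref{Product_WS}.

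If a self-contained argument were preferred instead, one could mimic the proof of Theorem~\ref{Product_WS} directly: assuming $S_a(A)\neq\emptyset$, pick $(h_n)\in S_a(A)$ with $f(a+h_n)-f(a-h_n)\to 0$; by Lemma~\ref{LimitSym}, $g(a+h_n)-g(a-h_n)\to 0$; choose $M,\delta>0$ bounding $|f|$ and $\bigl|\tfrac{1}{g}\bigr|$ on $(a-\delta,a+\delta)\cap A$ and $N$ so that $a\pm h_n$ lies there for $n\geq N$; then telescope
\[
\frac{f(a+h_n)}{g(a+h_n)}-\frac{f(a-h_n)}{g(a-h_n)}
= \frac{f(a+h_n)-f(a-h_n)}{g(a+h_n)} + f(a-h_n)\,\frac{g(a-h_n)-g(a+h_n)}{g(a+h_n)g(a-h_n)},
\]
and bound each term by a constant times $|f(a+h_n)-f(a-h_n)|$ or $|g(a+h_n)-g(a-h_n)|$ to conclude. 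But the reduction via Lemma~\ref{LS} and Theorem~\ref{Product_WS} is shorter and is the route I would take.
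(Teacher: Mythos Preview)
Your proposal is correct and matches the paper's own proof exactly: the paper simply says the result follows immediately from Lemma~\ref{LS} and Theorem~\ref{Product_WS}, which is precisely your reduction $\frac{f}{g}=f\cdot\frac{1}{g}$ with $\frac{1}{g}$ upgraded to symmetrically continuous via Lemma~\ref{LS}. Your bookkeeping checks (nowhere-vanishing $g$, matching local boundedness hypotheses, and the $S_a(A)=\emptyset$ case being absorbed) are all in order.
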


\begin{proof}
This follows immediately from Lemma \ref{LS} and Theorem \ref{Product_WS}.
\end{proof}

Next we obtain a result on the composition of functions.

\begin{theorem} \label{Composit_WS}
Let $f:A\rightarrow B$ and $g:B \rightarrow \R$. Assume that $f$ is weakly symmetrically continuous at $a$ and $g$ is uniformly continuous on $B$. Then $g\circ f$ is weakly symmetrically continuous at $a$.
\end{theorem}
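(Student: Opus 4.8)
The plan is to argue directly from the definitions, using the sequence supplied by the weak symmetric continuity of $f$ and then pushing it through $g$ by means of uniform continuity. First I would observe that $g \circ f$ and $f$ have the same domain $A$, so the relevant set of ``test sequences'' is the same: $S_a(A)$ for $g \circ f$ is literally $S_a(A)$ for $f$. Hence if $S_a(A) = \emptyset$ there is nothing to prove (by Remark \ref{remark1}), and we may assume $S_a(A) \neq \emptyset$.

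Next, invoking the weak symmetric continuity of $f$ at $a$, I would fix a sequence $(h_n) \in S_a(A)$ with $\lim_{n \to \infty} f(a+h_n) - f(a-h_n) = 0$. The claim is that this \emph{same} sequence witnesses the weak symmetric continuity of $g \circ f$ at $a$; that is, $\lim_{n\to\infty} (g\circ f)(a+h_n) - (g\circ f)(a-h_n) = 0$. To see this, let $\varepsilon > 0$. By uniform continuity of $g$ on $B$ there is $\delta > 0$ such that $u, v \in B$ and $|u - v| < \delta$ imply $|g(u) - g(v)| < \varepsilon$. Since $f$ maps into $B$, both $f(a+h_n)$ and $f(a-h_n)$ lie in $B$ for every $n$, and since $|f(a+h_n) - f(a-h_n)| \to 0$, there is $N \in \N$ with $|f(a+h_n) - f(a-h_n)| < \delta$ for all $n \geq N$. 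Applying the uniform-continuity estimate with $u = f(a+h_n)$, $v = f(a-h_n)$ gives $|(g\circ f)(a+h_n) - (g\circ f)(a-h_n)| < \varepsilon$ for all $n \geq N$, which is exactly what is needed.

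I do not anticipate a genuine obstacle here; the only point that deserves emphasis is \emph{why} uniform continuity (rather than mere continuity) of $g$ is essential. The sequences $\bigl(f(a+h_n)\bigr)$ and $\bigl(f(a-h_n)\bigr)$ need not converge, and need not stay near any single point of $B$; all we control is that their difference tends to $0$. Uniform continuity is precisely the hypothesis that converts ``the arguments get close to each other'' into ``the values get close to each other'' without reference to where in $B$ the arguments are. This mirrors the corresponding statement (3) for symmetric continuity recalled at the start of Section 3, and the proof is the analogue of that one with Lemma \ref{LimitSym}'s sequential characterization replaced by the sequence coming from Definition \ref{Def_WSC}.
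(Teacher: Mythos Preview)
Your proof is correct and follows essentially the same route as the paper's: assume $S_a(A)\neq\emptyset$, take the sequence $(h_n)$ witnessing weak symmetric continuity of $f$, and use uniform continuity of $g$ to convert $|f(a+h_n)-f(a-h_n)|<\delta$ into $|(g\circ f)(a+h_n)-(g\circ f)(a-h_n)|<\varepsilon$. Your added remarks on why uniform (rather than pointwise) continuity is needed are accurate and match the spirit of the example the paper gives immediately after the theorem.
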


\begin{proof}
Assume that $S_{a}(A) \neq \emptyset$. Then there exists $\left(h_n\right) \in S_{a}(A)$ such that $\lim_{n \rightarrow \infty} \left|f(a+h_n)-f(a-h_n)\right| = 0$. Let $\varepsilon > 0$. Then there exists $\delta > 0$ such that for all $x,y \in B$, if $\left|x-y\right| < \delta$, then 
\begin{align} \label{Eqn1}
	\left|g(x)-g(y)\right| < \varepsilon.
\end{align}
There is $N \in \N$ such that for all $n \geq \N$
\begin{align} \label{Eqn2}
	\left|f(a+h_n)-f(a-h_n)\right| < \delta.
\end{align}
Let $n \geq \N$. By $\left(\ref{Eqn1}\right)$ and $\left(\ref{Eqn2}\right)$, we obtain
\begin{align*}
	\left|(g\circ f)(a+h_n)-(g\circ f)(a-h_n)\right| = \left|g(f(a+h_n))-g(f(a-h_n)) \right| < \varepsilon.
\end{align*}
This shows that $(g\circ f)(a+h_n)-(g\circ f)(a-h_n)$ converges to $0$ as $n \rightarrow \infty$, and therefore $g\circ f$ is weakly symmetrically continuous at $a$. 
\end{proof}

The uniform continuity of the function $g$ in Theorem \ref{Composit_WS} cannot be replaced by continuity as shown in the next example.
\begin{example}
Defined $f, g:\R \rightarrow \R$ by
$$
f(x) = \begin{cases}
x+\frac{1}{x}, \quad & \text{if} \quad  x > 0; \\
-\frac{1}{x}, \quad & \text{if} \quad  x < 0;  \\
0, \quad & \text{if} \quad x = 0,   
\end{cases} 
\quad \text{and} \quad
g(x) =  \begin{cases}
x^2, \quad & \text{if} \quad  x \geq 0; \\
x, \quad & \text{if} \quad  x < 0.   
\end{cases} 
$$
It is shown in \cite{R_Sym} that $f$ is symmetrically continuous at $0$ and $g$ is continuous but is not uniformly continuous on $\R$. By Theorem \ref{SC_implie_WSC}, $f$ is weakly symmetrically continuous at $0$. In addition,
$$
\left(g\circ f\right)(x) = \begin{cases}
x^2+\frac{1}{x^2}+2 , \quad & \text{if} \quad  x > 0;  \\
\frac{1}{x^2}, \quad & \text{if} \quad  x < 0;  \\
0, \quad & \text{if} \quad x = 0.   
\end{cases} 
$$
Then for every $(h_n) \in S_0(\R)$, $\left(g\circ f\right)(0+h_n) - \left(g\circ f\right)(0-h_n) = (h_n)^2+\frac{1}{(h_n)^2}+2-\frac{1}{(h_n)^2} = (h_n)^2 + 2 \rightarrow 2 \neq 0$ as $n \rightarrow \infty$. So $g\circ f$ is not weakly symmetrically continuous at $0$.
\end{example}

\begin{corollary}
Let $f:A\rightarrow \R$ be weakly symmetrically continuous at $a \in A$ and $f(x) \geq 0 $ for all $x \in A$. Then $\sqrt{f}$ is weakly symmetrically continuous at $a$ 
\end{corollary}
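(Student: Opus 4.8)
The plan is to obtain this as an immediate consequence of Theorem~\ref{Composit_WS}. Since $f(x)\ge 0$ for every $x\in A$, we may regard $f$ as a function $f:A\to B$ with $B=[0,\infty)$, and $f$ is weakly symmetrically continuous at $a$ by hypothesis. Let $g:B\to\R$ be defined by $g(t)=\sqrt{t}$, so that $g\circ f=\sqrt{f}$. By Theorem~\ref{Composit_WS}, it then suffices to show that $g$ is uniformly continuous on $B$.

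To verify this, I would invoke the elementary inequality $\left|\sqrt{s}-\sqrt{t}\right|\le\sqrt{\left|s-t\right|}$, valid for all $s,t\ge 0$, which follows from the subadditivity of the square root ($\sqrt{s+t}\le\sqrt{s}+\sqrt{t}$). Given $\varepsilon>0$, the choice $\delta=\varepsilon^{2}$ then shows that $\left|s-t\right|<\delta$ implies $\left|g(s)-g(t)\right|<\varepsilon$ for all $s,t\in B$; hence $g$ is uniformly continuous on $B$, and Theorem~\ref{Composit_WS} yields that $\sqrt{f}=g\circ f$ is weakly symmetrically continuous at $a$.

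There is essentially no obstacle in this argument; the only step deserving a word of care is that the square root is uniformly continuous on the whole half-line $[0,\infty)$ and not merely on its bounded subsets, which is exactly what the H\"older-type bound above delivers. If one preferred not to quote that inequality, one could instead split $[0,\infty)$ into $[0,1]$, where $\sqrt{\cdot}$ is uniformly continuous by compactness, and $[1,\infty)$, where $\sqrt{\cdot}$ is Lipschitz with constant $\tfrac{1}{2}$, and patch the two estimates together; but the single inequality is cleaner.
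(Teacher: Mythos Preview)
Your proof is correct and follows exactly the same approach as the paper: apply Theorem~\ref{Composit_WS} with $g(t)=\sqrt{t}$ on $B=[0,\infty)$, noting that $g$ is uniformly continuous there. The paper simply asserts the uniform continuity of the square root, while you supply a verification via the inequality $|\sqrt{s}-\sqrt{t}|\le\sqrt{|s-t|}$; otherwise the arguments are identical.
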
 

\begin{proof} 
Let $g:[0, \infty) \rightarrow \R$ be defined by $g(x)=\sqrt{x}$. Then $g$ is uniformly continuous on $[0,\infty)$ and $g\circ f=\sqrt{f}$. By Theorem \ref{Composit_WS}, $\sqrt{f}$ is weakly symmetrically continuous at $a$. 
\end{proof}

%----------------------------------------- Sequence of Functions ------------------------------------------------------
\section{Sequences of Functions}

Let $\left(f_k\right)$ be a sequence of weakly symmetrically continuous functions.
In this section, we obtain basic results on pointwise and uniform convergence of $\left(f_k\right)$. We will see that pointwise limit of $\left(f_k\right)$ may or may not be weakly symmetrically continuous. In fact, although $f_k$ is symmetrically continuous for every $k$, the pointwise limit of $\left(f_k\right)$ may or may not be weakly symmetrically continuous as shown in the next example.

\begin{example}
For each $k \in \N$, let $f_k:\left[0,2\right] \rightarrow \R$ be given by 
$$
f_k(x) = \begin{cases}
x^k, \quad & \text{if} \quad x \in \left[0,1\right]; \\
1, \quad & \text{if} \quad x > 1.
\end{cases} 
$$
Let $f:\left[0,2\right] \rightarrow \R$ be given by
$$
f(x) = \begin{cases}
0, \quad & \text{if} \quad x \in \left[0,1\right);  \\
1, \quad & \text{if} \quad x \geq 1. 
\end{cases} 
$$
Then $f_k$ is a symmetrically continuous function for every $k$, $f_k \rightarrow f$, but $f$ is not weakly symmetrically continuous at $1$. 
\end{example}

Next we show that uniform limit of weakly symmetrically continuous functions is weakly symmetrically continuous. A method similar to Cantor diagonalization process is used in the proof.

\begin{theorem}
For each $k \in \N$, let $f_k:A \rightarrow \R$ be weakly symmetrically continuous at $a$. Suppose that $\left(f_k\right)$ converges uniformly on $A$ to a function $f$. Then $f$ is weakly symmetrically continuous at $a$.
\end{theorem}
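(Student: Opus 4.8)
The plan is to handle the trivial case first and then build a single witnessing sequence by a diagonal-type selection. If $S_a(A) = \emptyset$, then $f$ is weakly symmetrically continuous at $a$ directly from Definition \ref{Def_WSC} (cf.\ Remark \ref{remark1}), so I would assume $S_a(A) \neq \emptyset$. Since each $f_k$ is weakly symmetrically continuous at $a$ and $S_a(A) \neq \emptyset$, for every $k \in \N$ I can fix a sequence $(h_n^{(k)})_{n \in \N} \in S_a(A)$ with $\lim_{n \to \infty} f_k(a + h_n^{(k)}) - f_k(a - h_n^{(k)}) = 0$. The goal is to splice these sequences together into one sequence $(H_m) \in S_a(A)$ along which $f(a+H_m) - f(a-H_m) \to 0$.

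Next, for each $m \in \N$ I would use the uniform convergence to pick $K_m \in \N$ with $|f_{K_m}(x) - f(x)| < \tfrac{1}{3m}$ for all $x \in A$, and then, since the sequence $(h_n^{(K_m)})_n$ lies in $S_a(A)$ and makes $f_{K_m}(a + h_n^{(K_m)}) - f_{K_m}(a - h_n^{(K_m)}) \to 0$ as $n \to \infty$, I would choose an index $n_m$ large enough that simultaneously $0 < h_{n_m}^{(K_m)} < \tfrac{1}{m}$ and $|f_{K_m}(a + h_{n_m}^{(K_m)}) - f_{K_m}(a - h_{n_m}^{(K_m)})| < \tfrac{1}{3m}$. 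Setting $H_m := h_{n_m}^{(K_m)}$, the conditions $H_m > 0$, $H_m < \tfrac{1}{m}$, and $a \pm H_m \in A$ (inherited from $(h_n^{(K_m)})_n \in S_a(A)$) show that $(H_m)_{m \in \N} \in S_a(A)$.

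Finally I would estimate, for each $m$,
\begin{align*}
|f(a+H_m) - f(a-H_m)| &\leq |f(a+H_m) - f_{K_m}(a+H_m)| + |f_{K_m}(a+H_m) - f_{K_m}(a-H_m)| \\
&\quad + |f_{K_m}(a-H_m) - f(a-H_m)| < \tfrac{1}{3m} + \tfrac{1}{3m} + \tfrac{1}{3m} = \tfrac{1}{m},
\end{align*}
which forces $f(a+H_m) - f(a-H_m) \to 0$, and hence $f$ is weakly symmetrically continuous at $a$. The step I expect to be the main obstacle is the selection in the middle paragraph: the approximating index $K_m$ must be allowed to vary with $m$, since no single $f_k$ need be uniformly close to $f$, so one cannot simply reuse one of the given sequences; the point is that after committing to $f_{K_m}$ one can still extract from its attached sequence a single term that is both close to $0$ (to keep $(H_m)$ in $S_a(A)$) and makes the $f_{K_m}$-symmetric difference small — this diagonal choice of $n_m$, combined with the $\tfrac{\varepsilon}{3}$ split, is what makes the argument work.
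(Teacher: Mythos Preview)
Your proposal is correct and follows essentially the same diagonal-plus-$\varepsilon/3$ strategy as the paper: both arguments fix a witnessing sequence for each $f_k$, extract one term from each to assemble a single sequence in $S_a(A)$, and then control $|f(a+H_m)-f(a-H_m)|$ by a three-term split using uniform convergence. The only cosmetic difference is the order of choices---you first pick $K_m$ via uniform convergence and then select $n_m$, whereas the paper takes $K_m=m$ from the outset and invokes uniform convergence only in the final estimate---but this does not change the substance of the argument.
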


\begin{proof}
Assume that $S_{a}(A) \neq \emptyset$. For each $k \in \N$, since $f_k$ is weakly symmetrically continuous at $a$, there exists a sequence $\left(h_{n}^{\left(k\right)}\right)_{n \in \N} \in S_{a}(A)$ such that $\lim_{n \rightarrow \infty} \left|f_k(a+h_{n}^{\left(k\right)})-f_k(a-h_{n}^{\left(k\right)})\right| = 0$.	
For each $k \in \N$, there exists $N_k \in \N$ such that
\begin{equation} \label{The5eqn1}
 \left| h_{N_k}^{\left(k\right)} \right| < \frac{1}{k} \quad \text{and} \quad \left|f_k(a+h_{N_k}^{\left(k\right)})-f_k(a-h_{N_k}^{\left(k\right)})\right| < \frac{1}{k}. 
\end{equation}
For each $n \in \N$, let $x_n = h_{N_n}^{\left(n\right)}$.
Then $\left(x_n\right)$ converges to zero and $x_n > 0$ for every $n \in \N$. In addition, $a+x_n$ and $a-x_n$ are in $A$ for every $n \in \N$. Therefore $\left(x_n\right) \in S_a(A)$. It remains to show that $\lim_{n \rightarrow \infty} \left|f(a+x_n) - f(a-x_n)\right| = 0$.
Let $\varepsilon > 0$. Since $\left(f_k\right)$ converges uniformly to $f$ on $A$, there exists $M \in \N$ such that
\begin{equation} \label{The5eqn2}
	 \left|f_k(x)-f(x)\right| < \frac{\varepsilon}{3}
\end{equation}
for all $k \geq M$ and $x \in A$. There exists $K \in \N$ such that 
\begin{equation} \label{The5eqn3}
\frac{1}{K} < \frac{\varepsilon}{3}.
\end{equation} 
Let $N=\max\left\{M,K\right\}$ and let $n \geq N$. Then By $\left(\ref{The5eqn2}\right)$, $\left(\ref{The5eqn1}\right)$, the definition of $x_n$, and $\left(\ref{The5eqn3}\right)$, we obtain that 
\begin{align*}
	& \left|f(a+x_n)-f(a-x_n)\right| \\
	& \leq \left|f(a+x_n)-f_n(a+x_n)\right|+\left|f_n(a+x_n)-f_n(a-x_n)\right|+\left|f_n(a-x_n)-f(a-x_n)\right| \\
	& < \frac{\varepsilon}{3} + \left|f_n(a+x_n)-f_n(a-x_n)\right| + \frac{\varepsilon}{3} < \frac{\varepsilon}{3} + \frac{1}{n}+ \frac{\varepsilon}{3} < \frac{\varepsilon}{3} + \frac{\varepsilon}{3}+ \frac{\varepsilon}{3} = \varepsilon. 
\end{align*}
This completes the proof.
\end{proof}

\vskip.5cm \noindent{\bf Acknowledgement(s) :} The first author would like to thank Department of Mathematics, Faculty of Science, Silpakorn University for the support. The second author would like to thank the Development and Promotion of Science and Technology Talents Project (DPST) of Thailand for giving him scholarship.

\end{document}